\newcommand{\supp}{\mathsf{supt}}
\newcommand{\w}{\omega}
\newcommand{\Rel}{\mathsf{Rel}}
\newcommand{\FRel}{\mathsf{FRel}}
\newcommand{\Sym}{\mathsf{Sym}}
\newcommand{\FSym}{\mathsf{FSym}}
\newcommand{\End}{\mathsf{End}}
\newcommand{\F}{\mathcal F}
\newcommand{\Zeta}{\mathfrak Z_s}
\newcommand{\Cent}{\mathsf C}
\newtheorem{theorem}{Theorem}[section]
\newtheorem{lemma}[theorem]{Lemma}
\newtheorem{proposition}[theorem]{Proposition}
\newtheorem{corollary}[theorem]{Corollary}
\newtheorem{problem}[theorem]{Problem}
\theoremstyle{definition}
\newtheorem{definition}[theorem]{Definition}
\title[Perfectly supportable semigroups]{Perfectly supportable semigroups are $\sigma$-discrete in each Hausdorff shift-invariant topology}
\author{Taras Banakh and Igor Guran}
\address{Jan Kochanowski University in Kielce (Poland), and Ivan Franko National University of Lviv (Ukraine)}
\email{t.o.banakh@gmail.com, igor\_guran@yahoo.com}
\subjclass{20M20, 20B35, 22A15, 22A05, 54D45}
\keywords{Semi-Zariski topology, $\supp$-perfect semigroup,  $\sigma$-discrete space, the group of finitely supported permutations, the semigroup of finitely supported relations}
\begin{document}
\begin{abstract} In this paper we introduce perfectly supportable semigroups and prove that they are  $\sigma$-discrete in each Hausdorff shift-invariant topology. The class of perfectly supportable semigroups includes each semigroup $S$ such that $\FSym(X)\subset S\subset\FRel(X)$ where $\FRel(X)$ is the semigroup of finitely supported relations on an infinite set $X$ and $\FSym(X)$ is the group of finitely supported permutations of $X$.
\end{abstract}

\maketitle

\section{Introduction}

The problem considered in this paper traces its history back to S.Ulam who  asked in \cite[p.178]{Scot} and \cite{Ulam} if for some infinite set $X$ the group $\Sym(X)$ of bijections of $X$ carries a non-discrete locally compact group topology. The Ulam's problem was solved in negative in 1967 by E.Gaughan \cite{Gau}. This result motivated the following problem posed in \cite{CGGR}:

\begin{problem}\label{prob1} Let $X$ be a set of cardinality $|X|=\mathfrak c$ and let $\FSym(X)$ be the group of bijections $f:X\to X$ having finite support $\supp(f)=\{x\in X:f(x)\ne x\}$. Does $\FSym(X)$ admit a non-discrete  locally compact Hausdorff group topology? 
\end{problem}

In \cite{CGGR} it was shown that for any infinite set $X$ the group $\FSym(X)$ does not admit a compact
Hausdorff group topology. A negative answer to Problem~\ref{prob1} was given in \cite{BGP}, where the following theorem was proved:

\begin{theorem}[Banakh-Guran-Protasov]\label{t1} For any set $X$ the group $\FSym(X)$ is $\sigma$-discrete in each Hausdorff shift-invariant topology on $\FSym(X)$. Consequently, each Baire Hausdorff shift-invariant topology on $\FSym(X)$ is discrete.
\end{theorem}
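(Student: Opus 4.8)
The plan is to stratify $\FSym(X)$ by the size of the support and to prove that each stratum is a discrete subspace; since there are only countably many strata, this yields $\sigma$-discreteness. Concretely, write $\FSym(X)=\bigcup_{n\in\IN}F_n$ with $F_n=\{f\in\FSym(X):|\supp(f)|=n\}$, and fix a Hausdorff shift-invariant topology $\tau$. For a fixed $f_0\in F_n$ with $A=\supp(f_0)$ I want an open neighbourhood $U\ni f_0$ with $U\cap F_n=\{f_0\}$. There are only finitely many $g\in F_n$ with $\supp(g)=A$, and by the Hausdorff axiom each of them can be separated from $f_0$; intersecting these finitely many neighbourhoods disposes of the ``same-support'' competitors. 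The whole difficulty is therefore concentrated in separating $f_0$ from the infinitely many $g\in F_n$ whose support differs from $A$.

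First reduction: since $|\supp(g)|=|A|=n$, the inequality $\supp(g)\ne A$ forces $g$ to fix at least one point of $A$, so $\{g\in F_n:\supp(g)\ne A\}\subseteq\bigcup_{a\in A}\mathsf{St}(a)$, where $\mathsf{St}(a)=\{\phi:\phi(a)=a\}$ is the stabiliser of $a$. As $f_0$ moves every point of $A$, we have $f_0\notin\bigcup_{a\in A}\mathsf{St}(a)$. Hence it suffices to prove the key lemma: for each $a\in X$ the stabiliser $\mathsf{St}(a)$ is $\tau$-closed; then $K=\bigcup_{a\in A}\mathsf{St}(a)$ is a closed set containing all different-support competitors and missing $f_0$, and $G\setminus K$ is the required neighbourhood. (Equivalently, any hypothetical net $g_\lambda\to f_0$ inside $F_n$ reduces, by a pigeonhole argument on the finitely many points of $A$ and passage to a cofinal subnet, to a net lying in a single $\mathsf{St}(a)$ with $f_0\notin\mathsf{St}(a)$.)

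To attack the key lemma I would first record the soft consequences of shift-invariance: for fixed $u,w$ the agreement set $\{\phi:u\phi=\phi w\}$ is closed, being the equaliser of the two continuous shifts $\phi\mapsto u\phi$ and $\phi\mapsto\phi w$ into the Hausdorff space $G$; in particular every centraliser $\Cent(u)=\{\phi:u\phi=\phi u\}$ is closed. Next I would establish a dichotomy. Because the shifts by elements of $\mathsf{St}(a)$ are homeomorphisms, the closure $\overline{\mathsf{St}(a)}$ is invariant under left and right multiplication by $\mathsf{St}(a)$, i.e.\ it is a union of $(\mathsf{St}(a),\mathsf{St}(a))$-double cosets. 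Identifying $G/\mathsf{St}(a)$ with $X$ via $g\mathsf{St}(a)\mapsto g(a)$, and noting that $\mathsf{St}(a)$ has exactly the two orbits $\{a\}$ and $X\setminus\{a\}$ on $X$, one sees there are exactly two such double cosets, namely $\mathsf{St}(a)$ and $\{\phi:\phi(a)\ne a\}$. Therefore $\overline{\mathsf{St}(a)}$ is either $\mathsf{St}(a)$ or all of $G$: the stabiliser is either closed or dense, and the lemma is equivalent to excluding density.

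The hard part will be precisely the exclusion of the dense alternative, and here the purely algebraic tools (translations, equalisers of shifts, centralisers) are provably insufficient: a short computation shows that no finite intersection of the available closed sets can contain $\mathsf{St}(a)$ properly, and nets built only from left/right multiplication merely reproduce the ``value at $a$ is not closed'' configuration without contradiction. The resolution I would pursue is to exploit the ambient semigroup $\FRel(X)$ of finitely supported relations, in which the partial diagonal idempotents $e_a=\{(a,a)\}$ act as value-detectors: multiplication by $e_a$ isolates $\phi(a)$, so that in a shift-invariant topology on $\FRel(X)$ refining $\tau$ the set $\{\phi:\phi(a)=b\}$ becomes an equaliser of two continuous maps, hence closed. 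The crux is thus to show that a dense stabiliser would allow one to realise, via an accumulation of the convergent net inside $\FRel(X)$, an idempotent relation incompatible with the Hausdorffness of $\tau$; this is exactly the step where the machinery of perfectly supportable semigroups enters, and I expect it to be the main technical obstacle of the proof.
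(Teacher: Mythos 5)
Your stratification by support size, the reduction (via Hausdorffness for the finitely many same-support competitors, and via the observation that a permutation $g\in F_n$ with $\supp(g)\ne A=\supp(f_0)$ must fix some point of $A$) to separating $f_0$ from the stabilisers $\mathsf{St}(a)$, $a\in A$, and the remark that centralisers are closed as equalisers of continuous shifts, all correctly reproduce the skeleton of the argument. But the proof has a genuine gap exactly where you place the ``main technical obstacle'': the key lemma that each $\mathsf{St}(a)$ is $\tau$-closed is never proved, and the escape route you sketch cannot work. The topology $\tau$ is given only on $\FSym(X)$, and there is no reason whatsoever that it should extend to a shift-invariant topology on the larger semigroup $\FRel(X)$; the idempotents $e_a=\{(a,a)\}$ are therefore not available as continuous value-detectors. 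Moreover, the full statement you reduce to --- that $\{\phi:\phi(a)=a\}$ is closed in every Hausdorff shift-invariant topology --- is essentially the open problem recorded at the end of this paper (openness of $\{f\in S:x\in\supp(f)\}$ without restriction on the support size), so the proposal rests on an unproved, and possibly very hard, foundation.

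The missing idea is that you never need $\mathsf{St}(a)$ closed in all of $G$: it suffices to separate $f_0$ from $\mathsf{St}(a)\cap F_{\le n}$, where $F_{\le n}=\{h:|\supp(h)|\le n\}$, and for this the ``purely algebraic tools'' you declare provably insufficient do suffice. For each $a\in A$ choose $n+1$ distinct points $y_0,\dots,y_n\in X\setminus A$ and set $g_j=(a\,y_j)$. Since $f_0$ moves $a$ and fixes $y_j$, conjugation by $f_0$ carries $(a\,y_j)$ to $(f_0(a)\,y_j)\ne (a\,y_j)$, so no $g_j$ commutes with $f_0$ and $U_a=\bigcap_{j\le n}\{h:hg_j\ne g_jh\}$ is a $\tau$-open neighbourhood of $f_0$. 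If $h\in F_{\le n}$ fixes $a$, then because $|\supp(h)|\le n$ some $y_j$ avoids $\supp(h)$, hence $\supp(g_j)\cap\supp(h)=\emptyset$, $g_jh=hg_j$, and $h\notin U_a$. Thus the finite union of centralisers $\bigcup_{j\le n}\Cent(g_j)$ is a closed set containing $\mathsf{St}(a)\cap F_{\le n}$ and missing $f_0$ --- precisely the configuration you asserted could not be produced by finitely many of the available closed sets. Intersecting the $U_a$ over $a\in A$ with the Hausdorff separations from the same-support elements isolates $f_0$ in $F_n$. This relative, support-bounded version of your key lemma, proved by the $(n+1)$-witness pigeonhole, is the content of Lemma~\ref{lem2} and is what lets the paper's argument go through without ever deciding whether $\mathsf{St}(a)$ itself is closed or dense.
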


In this paper we generalize Theorem~\ref{t1} to the class of perfectly supportable semigroups.
Such semigroups are defined in Section~\ref{s:supp}. Our main result is Theorem~\ref{main}, proved in Section~\ref{s:main}. It implies Corollary~\ref{cor1} saying that each perfectly supportable semigroup is $\sigma$-discrete in each Hausdorff shift-invariant topology. In Section~\ref{s:rel} we present an example of perfectly supportable semigroup $\FRel(X)$, which contains many other perfectly supportable (semi)groups as sub(semi)groups.

\section{$\supp$-Semigroups and $\supp$-perfect semigroups}\label{s:supp}

In this section we define the classes of semigroups called $\supp$-semigroups and $\supp$-perfect semigroups.

\begin{definition}\label{def1} A {\em $\supp$-semigroup} is a pair $(S,\supp)$ consisting of a semigroup $S$ and a function $\supp:S\to 2^X$ with values in the power-set $2^X$ of some set $X$, such that for each elements $f,g\in S$ we get:
\begin{enumerate}
\item $\supp(fg)\subset\supp(f)\cup\supp(g)$,
\item $fg=gf$ if $\supp(f)\cap\supp(g)=\emptyset$.
\end{enumerate}
The function $\supp:S\to 2^X$ is called the {\em support map} of the $\supp$-semigroup $(S,\supp)$. The set $\supp(S)=\bigcup_{a\in S}\supp(a)\subset X$ is called {\em the support of $S$}.
\end{definition}

A typical example of a $\supp$-semigroup is the group $\Sym(X)$ of all bijections $f:X\to X$ of a set $X$, endowed with the support map $\supp:f\mapsto\{x\in X:f(x)\ne x\}$.
In Section~\ref{s:rel} we shall describe another $\supp$-semigroup $\Rel(X)$, which contains $\Sym(X)$ (and many other semigroups) as a $\supp$-subsemigroup.

Definition~\ref{def1} implies the following proposition-definition.

\begin{proposition} Let $(S,\supp)$ be a $\supp$-semigroup with support $X=\supp(S)$.
\begin{enumerate}
\item For each infinite cardinal $\lambda$ the set $S_{<\lambda}=\{f\in S:|\supp(f)|<\lambda\}$ is a subsemigroup of $S$;
\item For each subset $F\subset X$ the set $S(F)=\{f\in S:\supp(f)\subset F\}$ is a subsemigroup of $S$;
\item For each subsemigroup $T\subset S$ the pair $(T,\supp|T)$ is a $\supp$-semigroup.
\end{enumerate}
\end{proposition}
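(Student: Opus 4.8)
The plan is to treat the three statements separately, but in each case the entire content comes from directly applying the two defining axioms of a $\supp$-semigroup from Definition~\ref{def1}. The underlying semigroup structure and associativity are never in question, so all I need to check is closure under the semigroup operation for parts (1) and (2), and the two axioms themselves for part (3).

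For part (2) I would argue as follows. Fix $F\subset X$ and take $f,g\in S(F)$, so that $\supp(f)\subset F$ and $\supp(g)\subset F$. Axiom (1) gives $\supp(fg)\subset\supp(f)\cup\supp(g)\subset F$, whence $fg\in S(F)$. This is immediate and uses nothing beyond the containment axiom.

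For part (1) the same containment is the starting point, but I would combine it with cardinal arithmetic. Given $f,g\in S_{<\lambda}$ we have $|\supp(f)|,|\supp(g)|<\lambda$, and axiom (1) yields $|\supp(fg)|\le|\supp(f)\cup\supp(g)|\le|\supp(f)|+|\supp(g)|$. The one genuine point to flag is that this sum is again $<\lambda$, and this is exactly where the hypothesis that $\lambda$ is infinite enters: for an infinite cardinal $\lambda$, the sum of two cardinals each below $\lambda$ remains below $\lambda$ (if both summands are finite the sum is finite, hence $<\lambda$; if one is infinite the sum equals their maximum, again $<\lambda$). Therefore $fg\in S_{<\lambda}$.

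For part (3) there is nothing to prove about closure, since $T$ is assumed to be a subsemigroup; I would simply verify that the restricted map $\supp|T$ inherits axioms (1) and (2). For $f,g\in T$ both sides of axiom (1) are computed with the very same map $\supp$, so the inclusion $\supp(fg)\subset\supp(f)\cup\supp(g)$ carries over verbatim; and if $\supp(f)\cap\supp(g)=\emptyset$, then $fg=gf$ already holds in $S$ by axiom (2), hence in $T$. Thus $(T,\supp|T)$ is a $\supp$-semigroup. Since every step is a one-line application of the axioms, I expect no real obstacle; the only place demanding a moment's care is the elementary cardinal-arithmetic fact in part (1), together with the accompanying observation that it is precisely this fact that forces $\lambda$ to be infinite.
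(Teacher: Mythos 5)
Your proof is correct and follows exactly the route the paper intends: the paper states this proposition without proof, remarking only that it follows from Definition~\ref{def1}, and your verification (axiom (1) for closure in parts (1) and (2), the additivity of cardinals below an infinite $\lambda$, and restriction of both axioms for part (3)) is precisely the argument being left implicit. Nothing is missing.
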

\smallskip

\begin{definition} A $\supp$-semigroup $(S,\supp)$ with support set $X=\supp(S)$ is called a {\em $\supp$-finitary semigroup} if
\begin{enumerate}
\item each element $f\in S$ has finite support $\supp(f)$ and
\item for each finite subset $F\subset X$ the subsemigroup $S(F)=\{f\in S:\supp(f)\subset F\}$ of $S$ is finite.
\end{enumerate}
\end{definition}

This definition implies that each $\supp$-finitary semigroup $S$ coincides with its subsemigroup $S_{<\w}$ consisting of finitely supported elements of $S$.

A typical example of a $\supp$-finitary $\supp$-semigroup is the group $\FSym(X)=\Sym(X)_{<\w}$ of finitely supported bijections of a set $X$.
\smallskip

To define $\supp$-perfect semigroups, we need to recall some information on centralizers.

For a semigroup $S$, an element $f\in S$, and a subset $T\subset S$ by
$$\Cent(f)=\{g\in G:fg=gf\}\mbox{ \ and \ }\Cent(T)=\bigcap_{f\in T}\Cent(f)$$
we denote the {\em centralizers} of $f$ and $T$, respectively.

The condition (2) of Definition~\ref{def1} implies that $S(F)\subset \Cent(S(X{\setminus}F))$ for any finite subset $F\subset X$. For $\supp$-perfect semigroups the converse implication is also true.

\begin{definition}\label{def2} A $\supp$-semigroup $(S,\supp)$ is called a {\em $\supp$-perfect semigroup} if it is $\supp$-finitary and for each finite subset $F\subset X$ we get $S(F)=\Cent(S(X{\setminus}F))$.
\end{definition}

\begin{definition} A semigroup $S$ is called {\em perfectly supportable} if for some function $\supp:S\to 2^X$ the pair $(S,\supp)$ is a $\supp$-perfect semigroup.
\end{definition}

Let us discuss the relation of perfectly supportable semigroups with some other classes of semigroups.

\begin{definition} We shall say that a semigroup $S$
\begin{itemize}
\item is {\em locally finite} if each finite subset $F\subset S$ lies in a finite subsemigroup $T\subset S$;
\item has {\em finite double centralizers} if for any finite subset $F\subset S$ its double centralizer $\Cent(\Cent(F))$ is finite.
\end{itemize}
\end{definition}

Since the double centralizer $\Cent(\Cent(F))$ is a subsemigroup that contains $F$, each semigroup with finite double centralizers is locally finite.

\begin{proposition}\label{fdc} Each perfectly supportable semigroup has finite double centralizers and hence is locally finite.
\end{proposition}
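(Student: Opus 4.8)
The plan is to show directly that for any finite subset $F\subset S$ the double centralizer $\Cent(\Cent(F))$ is contained in a finite subsemigroup of the form $S(E)$; finiteness of such subsemigroups is exactly the second clause of the $\supp$-finitary hypothesis, and local finiteness then follows at once from the remark preceding the proposition.

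First I would collect the supports of the elements of $F$ into a single finite set. Since $S$ is $\supp$-finitary, each $f\in F$ has finite support, so $E:=\bigcup_{f\in F}\supp(f)$ is a finite union of finite sets and hence a finite subset of $X=\supp(S)$. By construction $\supp(f)\subset E$ for every $f\in F$, i.e. $F\subset S(E)$.

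The key step is to bound $\Cent(F)$ from below by $S(X\setminus E)$. If $g\in S(X\setminus E)$ and $f\in F$, then $\supp(g)\subset X\setminus E$ and $\supp(f)\subset E$ are disjoint, so condition (2) of Definition~\ref{def1} gives $fg=gf$; as $f\in F$ was arbitrary this means $g\in\Cent(F)$. Hence $S(X\setminus E)\subset\Cent(F)$. Since $T\mapsto\Cent(T)$ is order-reversing (a larger set imposes more commutation conditions), this yields $\Cent(\Cent(F))\subset\Cent(S(X\setminus E))$.

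Finally I would invoke the defining property of a $\supp$-perfect semigroup. Applied to the finite set $E\subset X$, Definition~\ref{def2} gives $\Cent(S(X\setminus E))=S(E)$. Combining this with the previous inclusion yields $\Cent(\Cent(F))\subset S(E)$, and since $S(E)$ is finite by the $\supp$-finitary hypothesis, $\Cent(\Cent(F))$ is finite as well. I do not expect a serious obstacle here: the argument is a short chain of inclusions, and the only point that requires care is to apply the $\supp$-perfect equality in the correct direction, converting the centralizer of the ``far away'' subsemigroup $S(X\setminus E)$ back into the concrete finite subsemigroup $S(E)$. The concluding clause ``and hence is locally finite'' needs no new work, being precisely the implication recorded just before the statement.
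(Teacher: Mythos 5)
Your proposal is correct and follows essentially the same argument as the paper: both show that $S(X\setminus E)\subset\Cent(F)$ via disjointness of supports and then apply the $\supp$-perfect equality $\Cent(S(X\setminus E))=S(E)$ to conclude $\Cent(\Cent(F))\subset S(E)$, a finite subsemigroup. The only difference is presentational --- you phrase it as a direct chain of inclusions using the order-reversing property of centralizers, while the paper argues by contradiction.
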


\begin{proof} Let $\supp:S\to 2^X$ be a support map that turns $S$ into a $\supp$-perfect semigroup. We claim that for each finite subset $T\subset S$ its double centralizer $\Cent(\Cent(T))$ lies in the finite subsemigroup $S(F)$ where $F=\bigcup_{f\in T}\supp(f)$. Assuming that $\Cent(\Cent(T))\not\subset S(F)$ and taking into account that $S(F)=\Cent(S(X{\setminus}F))$, we can find an element $f\in \Cent(\Cent(T))\setminus \Cent(S(X{\setminus}F))$. Then for some $g\in S(X{\setminus}F)$ we get $fg\ne gf$.
On the other hand, the condition (2) of Definition~\ref{def1} guarantees that $g\in S(X{\setminus F})\subset \Cent(S(F))\subset \Cent(T)$ and hence $f\notin \Cent(\Cent(T))$, which is a desired contradiction.
\end{proof}

\section{The topological structure of $\supp$-perfect and perfectly supportable semigroups}\label{s:main}

In this section we shall study the topological structure of $\supp$-perfect and perfectly supportable semigroups endowed with shift-invariant topologies.

A topology $\tau$ on a semigroup $S$ is called {\em shift-invariant} if for every $a\in A$ the left and right shifts $$l_a:S\to \;\; l_a:x\mapsto ax\mbox{ \ \ and \ \ }r_a:S\to S,\;\;r_a:x\mapsto xa,$$ are continuous.
This is equivalent to saying that the semigroup operation $S\times S\to S$ is separately continuous.

Now on each semigroup $S$ we define a shift-invariant $T_1$-topology (called the semi-Zariski topology) which is weaker than each Hausdorff shift-invariant topology on $S$.

The {\em semi-Zariski topology} $\Zeta$ on a semigroup $S$ is the topology generated by the sub-base consisting of the sets
$$\{x\in S:axb\ne c\}\mbox{ \ and \ }\{x\in S:axb\ne cxd\}$$where $a,b,c,d\in S^1$ and $S^1=S\cup\{1\}$ stands for the semigroup $S$ with attached external unit $1\notin S$ (i.e., an element $1$ such that $1x=x1=x$ for all $x\in S^1$). The semi-Zariski topology $\Zeta$ is a particular case of algebraically determined topologies on G-acts, considered in \cite{BPS}.

The definition of the semi-Zariski topology implies the following simple (but important) fact.

\begin{proposition} The semi-Zariski topology $\Zeta$ on a semigroup $S$ is weaker than each shift-invariant Hausdorff topology on $S$.
\end{proposition}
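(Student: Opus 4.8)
The plan is to show that the semi-Zariski topology $\Zeta$ is coarser than any shift-invariant Hausdorff topology $\tau$ on $S$. Since both topologies live on the same underlying set $S$, it suffices to verify that every sub-basic open set of $\Zeta$ is already $\tau$-open; the topology generated by a sub-base is the coarsest topology containing those sets, so if each sub-basic set is $\tau$-open then $\Zeta\subseteq\tau$. Thus I would reduce the proposition to checking openness of the two families of sub-basic sets
$$U_{a,b,c}=\{x\in S:axb\ne c\}\quad\text{and}\quad V_{a,b,c,d}=\{x\in S:axb\ne cxd\},$$
where $a,b,c,d$ range over $S^1$.

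First I would handle the sets of the first type. Consider the map $\varphi_{a,b}:S\to S$ defined by $x\mapsto axb$, which is a composition of a left shift $l_a$ and a right shift $r_b$ (interpreting multiplication by the external unit $1$ as the identity map, so these remain continuous). Since $\tau$ is shift-invariant, both $l_a$ and $r_b$ are $\tau$-continuous, hence $\varphi_{a,b}$ is $\tau$-continuous. Because $\tau$ is Hausdorff, in particular $T_1$, the singleton $\{c\}$ is $\tau$-closed, so its preimage $\varphi_{a,b}^{-1}(\{c\})=\{x\in S:axb=c\}$ is $\tau$-closed, and therefore its complement $U_{a,b,c}$ is $\tau$-open.

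For the second family I would use the diagonal. Define two continuous maps $\varphi_{a,b},\psi_{c,d}:S\to S$ by $\varphi_{a,b}(x)=axb$ and $\psi_{c,d}(x)=cxd$; as above each is $\tau$-continuous by shift-invariance. The set $V_{a,b,c,d}$ is exactly the set of $x$ on which these two maps disagree. Here is where the Hausdorff hypothesis does real work: consider the map $\Phi:S\to S\times S$, $x\mapsto(\varphi_{a,b}(x),\psi_{c,d}(x))$, which is continuous into the product topology. Since $\tau$ is Hausdorff, the diagonal $\Delta=\{(y,y):y\in S\}$ is closed in $S\times S$, so $\Phi^{-1}(\Delta)=\{x:axb=cxd\}$ is $\tau$-closed, whence $V_{a,b,c,d}$ is $\tau$-open.

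The main obstacle, and the only place where more than $T_1$ is needed, is precisely this last step: distinguishing two values $axb$ and $cxd$ that both vary with $x$ requires closedness of the diagonal in $S\times S$, which is equivalent to the Hausdorff property, rather than mere closedness of singletons. Once both sub-basic families are shown $\tau$-open, the topology they generate is contained in $\tau$, giving $\Zeta\subseteq\tau$ and completing the proof. I would remark in passing that the first family already yields the $T_1$ separation of $\Zeta$ itself, justifying the earlier assertion that $\Zeta$ is a shift-invariant $T_1$-topology.
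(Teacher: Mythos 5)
Your proof is correct and is exactly the standard argument the authors have in mind (the paper omits the proof entirely, calling the fact a consequence of the definition): each sub-basic set of the first kind is open since $x\mapsto axb$ is $\tau$-continuous and singletons are $\tau$-closed, and each set of the second kind is open by the closed-diagonal characterization of Hausdorffness applied to $x\mapsto(axb,cxd)$. Your observation about where $T_1$ suffices versus where the full Hausdorff hypothesis is needed is accurate and worth keeping.
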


Now, we shall study the semi-Zariski topology on $\supp$-perfect semigroups.
The following theorem is the main result of this paper.

\begin{theorem}\label{main} Let $(S,\supp)$ be a $\supp$-perfect semigroup endowed with the semi-Zariski topology $\Zeta$. For every $n\ge 0$
\begin{enumerate}
\item the set $S_{\le n}=\{f\in S:|\supp(f)|\le n\}$ is closed in $S$;
\item for every $x\in X$ the set $\{f\in S_{\le n}:x\in \supp(f)\}$ is open in $S_{\le n}$;
\item the set $S_{=n}=\{f\in S:|\supp(f)|=n\}$ is discrete in $S$;
\end{enumerate}
\end{theorem}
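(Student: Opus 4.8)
The plan is to use only two elementary features of the semi-Zariski topology $\Zeta$. First, $\Zeta$ is $T_1$: taking $a=b=1$ in the sub-basic sets shows each singleton $\{c\}$ is closed. Second, for every $g\in S$ the centralizer $\Cent(g)=\{f: gf=fg\}$ is $\Zeta$-closed, being the complement of the sub-basic open set obtained with $a=g,\ b=1,\ c=1,\ d=g$; hence every set $\{f: fg\ne gf\}$ is open, and these are the only open sets I shall use. The engine of the argument is the following avoidance form of support detection, which I would isolate as a lemma and which is the one genuinely non-formal step: \emph{if $x\in\supp(f)$ and $D\subset X$ is finite with $x\notin D$, then there is $g\in S$ with $fg\ne gf$, $\supp(g)\cap\supp(f)=\{x\}$ and $\supp(g)\cap D=\emptyset$.} To prove it, set $F=(\supp(f)\cup D)\setminus\{x\}$; this is finite and, since $x\in\supp(f)\setminus F$, we have $\supp(f)\not\subset F$, i.e.\ $f\notin S(F)$. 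Perfectness (Definition~\ref{def2}) gives $S(F)=\Cent(S(X\setminus F))$, so $f$ fails to commute with some $g\in S(X\setminus F)$. As $\supp(g)\subset X\setminus F=(X\setminus(\supp(f)\cup D))\cup\{x\}$, we get $\supp(g)\cap D=\emptyset$ and $\supp(g)\cap\supp(f)\subset\{x\}$, while $fg\ne gf$ forces $\supp(g)\cap\supp(f)\ne\emptyset$ by Definition~\ref{def1}(2); so this intersection is exactly $\{x\}$. The point is that enlarging the excluded set $F$ by $D$ steers the ``far'' part of the witness away from any prescribed finite region, which is what makes the witnesses below disjoint.

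For (1) I would show $S\setminus S_{\le n}$ is open. Fix $f_0$ with $|\supp(f_0)|\ge n+1$ and choose distinct $x_0,\dots,x_n\in\supp(f_0)$. Applying the lemma repeatedly, with avoidance set $D_i=(\supp(f_0)\setminus\{x_i\})\cup\bigcup_{j<i}\supp(g_j)$ at the $i$-th step, I obtain $g_0,\dots,g_n$ with $f_0g_i\ne g_if_0$ and pairwise disjoint supports, each meeting $\supp(f_0)$ only in $x_i$. Then $V=\bigcap_{i=0}^n\{f: fg_i\ne g_if\}$ is open and contains $f_0$, and for every $f\in V$ the relation $fg_i\ne g_if$ forces $\supp(f)\cap\supp(g_i)\ne\emptyset$, so disjointness of the $\supp(g_i)$ yields $n+1$ distinct points in $\supp(f)$. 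Thus $V\cap S_{\le n}=\emptyset$, proving $S_{\le n}$ closed.

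Statement (2) is the same idea run against the cardinality bound. Given $f_0\in S_{\le n}$ with $x\in\supp(f_0)$, I would use the lemma $n+1$ times with the single target point $x$ and avoidance sets $D_j=\bigcup_{i<j}(\supp(g^{(i)})\setminus\{x\})$, obtaining $g^{(0)},\dots,g^{(n)}$ with $f_0g^{(j)}\ne g^{(j)}f_0$, $\supp(g^{(j)})\cap\supp(f_0)=\{x\}$, and pairwise disjoint sets $\supp(g^{(j)})\setminus\{x\}$. Put $V=\bigcap_{j=0}^n\{f: fg^{(j)}\ne g^{(j)}f\}$, an open neighbourhood of $f_0$. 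For $f\in V\cap S_{\le n}$ assume $x\notin\supp(f)$; then for each $j$ the set $\supp(f)\cap\supp(g^{(j)})$ is nonempty and contained in $\supp(g^{(j)})\setminus\{x\}$, and since these $n+1$ sets are pairwise disjoint, $|\supp(f)|\ge n+1$, contradicting $f\in S_{\le n}$. Hence $x\in\supp(f)$, so $V\cap S_{\le n}\subset\{f\in S_{\le n}: x\in\supp(f)\}$, which is therefore open in $S_{\le n}$.

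Finally (3) is bookkeeping on top of (1) and (2). For $f_0\in S_{=n}$ with $A=\supp(f_0)$, the set $U=\bigcap_{x\in A}\{f\in S_{\le n}: x\in\supp(f)\}$ is open in $S_{\le n}$ by (2) and contains $f_0$; any $f\in U\cap S_{=n}$ has $A\subset\supp(f)$ and $|\supp(f)|=n=|A|$, hence $\supp(f)=A$, so $U\cap S_{=n}\subset S(A)$, a finite set by the $\supp$-finitary hypothesis. Writing $U=V_0\cap S_{\le n}$ with $V_0$ open in $S$ and deleting from $V_0$ the finitely many points of $(U\cap S_{=n})\setminus\{f_0\}$ (each closed, by $T_1$) produces an open $V$ with $V\cap S_{=n}=\{f_0\}$; as $f_0$ was arbitrary, $S_{=n}$ is discrete. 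I expect the only real obstacle to be the avoidance lemma, specifically the insight that perfectness applied to the enlarged finite set $(\supp(f)\cup D)\setminus\{x\}$ already delivers a non-commuting witness confined to $\{x\}\cup(X\setminus(\supp(f)\cup D))$; once this localization of witnesses is available, parts (1)--(3) reduce to disjointness-and-counting together with $T_1$ separation.
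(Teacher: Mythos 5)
Your proposal is correct and follows essentially the same route as the paper: your avoidance lemma is the single-witness form of the paper's Lemma~\ref{lem1} (apply perfectness to a finite set that omits $x$ but contains everything to be avoided), and your counting of $n+1$ non-commuting witnesses with pairwise disjoint outer supports against the bound $|\supp(h)|\le n$ is exactly the mechanism of the paper's Lemma~\ref{lem2}. The only cosmetic differences are that you prove closedness of $S_{\le n}$ directly from $n+1$ disjointly supported witnesses rather than deducing it from the support-monotonicity statement, and that your counting step silently absorbs the paper's separate case of a witness supported on $\{x\}$ alone (there $\emptyset\ne\supp(f)\cap\supp(g)\subset\supp(g)\setminus\{x\}=\emptyset$ is already the desired contradiction), which is fine.
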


\begin{proof} First, we prove two lemmas.

\begin{lemma}\label{lem1} For each $f\in S$, each point $x\in\supp(f)$, and each finite subset $F\subset X\setminus\{x\}$ either $S(\{x\})\setminus \Cent(f)\ne\emptyset$ or else there is an infinite subset $\F\subset S(X{\setminus}F)\setminus \Cent(f)$ such that $\supp(g)\cap\supp(h)\subset\{x\}\ne\supp(g)$ for any distinct elements $g,h\in\F$.
\end{lemma}

\begin{proof} Assume that $S(\{x\})\setminus \Cent(f)=\emptyset$. By induction we shall construct a sequence of elements $(g_i)_{i\in\w}$ of the semigroup $S$ and an increasing sequence $(F_i)_{i\in\w}$ of finite subsets of $X$ such that $F_{0}=F$ and for every $i\in\w$ the following conditions hold:
\begin{enumerate}
\item $x\notin F_{i}$,
\item $fg_i\ne g_if$,
\item $\supp(g_i)\cap F_i=\emptyset$ and $\supp(g_i)\not\subset\{x\}$,
\item $F_{i+1}=F_{i}\cup\supp(g_i)\setminus\{x\}$.
\end{enumerate}

Assume that for some $i\ge 0$ the set $F_{i}$ has been constructed. Since $x\in\supp(f)$ and $x\notin F_{i}$, we get $f\notin S(F_{i})=\Cent(S(X{\setminus}F_i))$ and hence there is an element $g_i\in S(X{\setminus}F_{i})$ such that $fg_i\ne g_if$. Put $F_{i+1}=F_{i}\cup\supp(g_i)\setminus\{x\}$ and observe that  $S(\{x\})\setminus \Cent(f)=\emptyset$ implies $\supp(g_i)\not\subset\{x\}$. This completes the inductive construction.
\smallskip

It follows that for any number $i<j$ we get $$\supp(g_j){\setminus}\{x\}\subset X{\setminus} F_{j}\subset X{\setminus} F_{i+1}\subset X\setminus \big(\supp(g_i){\setminus}\{x\}\big),$$ which implies  that the non-empty sets $\supp(g_i){\setminus}\{x\}$ and $\supp(g_j){\setminus}\{x\}$ are disjoint. Then $g_i\ne g_j$ and $\supp(g_i)\cap\supp(g_j)\subset\{x\}$.

So, $\F=\{g_i\}_{i\in\w}$ is a required infinite set in $S{\setminus} \Cent(f)$ such that  $\supp(g)\cap\supp(h)\subset\{x\}\ne\supp(g)$ for any distinct elements $g,h\in\F$.
\end{proof}

\begin{lemma}\label{lem2} For every $n\in\w$, each point $f\in S$ has a neighborhood $O_f\in\Zeta$ in the semi-Zariski topology such that for each $g\in O_f\cap S_{\le n}$ we get $\supp(f)\subset\supp(g)$.
\end{lemma}

\begin{proof} Let $m=|\supp(f)|$ and $\supp(f)=\{x_1,\dots,x_m\}$ be an enumeration of the finite set $\supp(f)$. For every $i\le m$ by induction we shall construct an increasing sequence of finite sets $F_0\subset F_1\subset\dots\subset F_m$ in $X$ and a sequence $\F_1,\dots,\F_m$ of non-empty finite subsets of $S\setminus \Cent(f)$ such that for every positive $i\le m$ the following conditions are satisfied:
\begin{enumerate}
\item either $\F_i\subset S(\{x_i\})$ or $|\F_i|=n+1$;
\item $\supp(g)\cap F_{i-1}\subset\{x_i\}$ for each $g\in\F_i$;
\item $\supp(g)\cap\supp(h)\subset\{x_i\}$ for any distinct elements $g,h\in\F_i$;
\item $F_i=F_{i-1}\cup\bigcup_{g\in\F_i}\supp(g)$.
\end{enumerate}

We start the inductive construction letting $F_0=\supp(f)$. Assume that for some $i<m$ the finite set $F_{i-1}$ has been constructed. Let $F=F_{i-1}\setminus\{x_i\}$ and apply Lemma~\ref{lem1} to find a non-empty family $\F_i\subset S(X{\setminus}F)\setminus \Cent(f)$ which satisfies the conditions (1) and (3) of the inductive construction. It follows from $\F_i\subset S(X{\setminus}F)$ that the condition (2) is satisfies. Finally, define the finite set $F_i$ by the condition (4). This completes the inductive construction.
\smallskip

The family $\F=\bigcup_{i=0}^m\F_i\subset S\setminus \Cent(f)$ determines an open neighborhood
$$O_f=\{h\in S:\forall g\in\F\;\;hg\ne gh\}$$of $f$ in the semi-Zariski topology $\Zeta$.
We claim that $\{x_1,\dots,x_m\}=\supp(f)\subset\supp(h)$ for each element $h\in O_f\cap S_{\le n}$. Assume conversely that $x_i\notin\supp(h)$ for some $i\le m$ and consider two cases.

(i) If $\F_i\subset S(\{x_i\})$, then for each element $g\in\F_i$ we get $\supp(g)\cap\supp(h)\subset \{x_i\}\cap\supp(h)=\emptyset$ and hence $gh=hg$ by the condition (2) of Definition 1.

(ii) If $\F_i\not\subset S(\{x_i\})$, then $|\F_i|=n+1$ and by the conditions (3) of the inductive construction, the family $\big\{\supp(g)\setminus\{x_i\}\big\}_{g\in\F_i}$ is disjoint and consists of non-empty sets. Since $|\F_i|=n+1>|\supp(h)|$, there is an element $g\in\F_i$ such that $\supp(g)\setminus\{x_i\}$ is disjoint with $\supp(h)$. Since $x_i\notin\supp(h)$, the supports $\supp(g)$ and $\supp(h)$ are disjoint and hence $gh=hg$ by the condition (2) of Definition~\ref{def1}.

In both cases, we get an element $g\in\F$ with $gh=hg$, which contradicts the choice of $h\in O_f$.
\end{proof}
\smallskip

Now we can finish the proof Theorem~\ref{main}. Let $n\in\w$.
\smallskip

1. To show that the set $S_{\le n}=\{f\in S:|\supp(f)|\le n\}$ is closed in $S$, fix any element $f\in S\setminus S_{\le n}$. By Lemma~\ref{lem2}, the element $f$ has a neighborood $O_f\subset S$ in the semi-Zariski topology $\Zeta$ such that each $g\in O_f\cap S_{\le n}$ has support
$\supp(g)\supset \supp(f)$, which implies that $n\ge |\supp(g)|\ge|\supp(f)|>n$ and $O_f\cap S_{\le n}=\emptyset$. So, $S_{\le n}$ is closed in $S$.
\smallskip

2. The second item of Theorem~\ref{main} follows directly from Lemma~\ref{lem2}.
\smallskip

3. Finally we show that the set $S_{=n}=\{f\in S:|\supp(f)|=n\}$ is discrete in $(S,\Zeta)$. Fix any element $f\in S_{=n}$ and using Lemma~\ref{lem2}, find a neighborhood $O_f\in\Zeta$ of $f$ such that $\supp(f)\subset\supp(h)$ for each $h\in O_f\cap S_{=n}$. Since $|\supp(f)|=n=|\supp(h)|$, we conclude that $\supp(f)=\supp(g)$ and hence $O_f\cap S_{=n}$ lies in the semigroup $S(\supp(f))$, which is finite by the condition (2) of Definition~\ref{def2}. Since the semi-Zariski topology $\Zeta$ satisfies the separation axiom $T_1$, the open finite subspace $O_f\cap S_{=n}$ of $S_=n$ is discrete and hence $f$ is an isolated point of $S_{=n}$, which means that the space $S_{=n}$ is discrete.
\end{proof}

Let us recall that topological space $X$ is {\em $\sigma$-discrete} if it can be written as a countable union of discrete subspaces. A topology $\tau$ on a set $X$ is called {\em Baire} if
for any sequence $(U_n)_{n\in\w}$ of open dense subsets of the topological space $(X,\tau)$ the intersection $\bigcap_{n\in\w}U_n$ is dense in $X$. It is well-known that each $\sigma$-discrete Baire $T_1$-space has an isolated point.

 Theorem~\ref{main} implies the main corollary of this paper:

\begin{corollary}\label{cor1} Each perfectly supportable semigroup $S$ is $\sigma$-discrete in its semi-Zariski topology and hence is $\sigma$-discrete in each shift-invariant Hausdorff topology on $S$.
\end{corollary}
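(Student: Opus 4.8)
The plan is to deduce Corollary~\ref{cor1} from Theorem~\ref{main} by showing that the whole semigroup $S$ decomposes into countably many discrete pieces in the semi-Zariski topology. Since $(S,\supp)$ is $\supp$-perfect, it is in particular $\supp$-finitary, so every element has finite support and therefore $S=\bigcup_{n\in\w}S_{=n}$, where $S_{=n}=\{f\in S:|\supp(f)|=n\}$. This is already a countable union, so it suffices to know that each $S_{=n}$ is a discrete subspace of $(S,\Zeta)$ — which is precisely item (3) of Theorem~\ref{main}. Hence $(S,\Zeta)$ is $\sigma$-discrete.

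The next step is to transfer this from the semi-Zariski topology to an arbitrary shift-invariant Hausdorff topology $\tau$ on $S$. Here I would invoke the proposition stated earlier in the excerpt, namely that $\Zeta$ is weaker than every shift-invariant Hausdorff topology on $S$. The key elementary observation is that discreteness of a subspace is preserved when one passes to a finer topology: if $A\subset S$ is discrete in $\Zeta$, then for each $a\in A$ there is a $\Zeta$-open set $U$ with $U\cap A=\{a\}$; since $\tau\supset\Zeta$, the same $U$ is $\tau$-open, so $a$ is isolated in $(A,\tau)$ as well. Applying this to each $S_{=n}$ shows that $S=\bigcup_{n\in\w}S_{=n}$ is a $\sigma$-discrete decomposition of $(S,\tau)$, which is exactly the claim.

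There is essentially no hard obstacle left, since the heavy lifting — proving that $S_{=n}$ is $\Zeta$-discrete — has already been carried out in Theorem~\ref{main}. The only point requiring a little care is the logical structure of the definition of ``perfectly supportable'': the corollary quantifies over all perfectly supportable semigroups, so I must begin by \emph{choosing} a support map $\supp:S\to 2^X$ witnessing that $S$ is $\supp$-perfect, and only then apply the theorem to the pair $(S,\supp)$. After that choice is fixed, the argument is the two-line assembly above: the $\supp$-finitary property gives the countable cover by the sets $S_{=n}$, Theorem~\ref{main}(3) gives their $\Zeta$-discreteness, and monotonicity of discreteness under topology refinement upgrades this to any shift-invariant Hausdorff topology.
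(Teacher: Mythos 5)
Your proposal is correct and is exactly the derivation the paper intends (the paper states the corollary as an immediate consequence of Theorem~\ref{main} without writing it out): the $\supp$-finitary property gives $S=\bigcup_{n\in\w}S_{=n}$, item (3) of the theorem gives $\Zeta$-discreteness of each piece, and the fact that $\Zeta$ is weaker than any shift-invariant Hausdorff topology transfers the decomposition. Your explicit remarks about first fixing a witnessing support map and about discreteness being preserved under refinement are exactly the right points of care.
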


Another corollary concerns perfectly supportable groups. A group $G$ is called {\em perfectly supportable} if it is perfectly supportable as a semigroup.

\begin{corollary}\label{cor2} Each perfectly supportable group $G$ is discrete in each Baire shift-invariant Hausdorff topology on $G$.
\end{corollary}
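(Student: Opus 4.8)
The plan is to combine Corollary~\ref{cor1} with the stated fact that every $\sigma$-discrete Baire $T_1$-space has an isolated point. Suppose $G$ carries a Baire shift-invariant Hausdorff topology $\tau$. Since $\tau$ is Hausdorff and shift-invariant, it refines the semi-Zariski topology $\Zeta$, so by Corollary~\ref{cor1} the space $(G,\tau)$ is $\sigma$-discrete. I would first check that $\sigma$-discreteness survives this refinement: writing $G=\bigcup_{n\in\w}D_n$ with each $D_n$ discrete in $\Zeta$, the identity map $(G,\tau)\to(G,\Zeta)$ is continuous, and a discrete subspace in the coarser topology remains discrete in the finer one, so each $D_n$ is also discrete in $\tau$. (Alternatively one invokes Corollary~\ref{cor1} directly, since it already asserts $\sigma$-discreteness in each shift-invariant Hausdorff topology.)

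Next, since $(G,\tau)$ is Hausdorff it is in particular $T_1$, and by hypothesis it is Baire; so the quoted fact yields an isolated point $g_0\in(G,\tau)$. This is the crux of the argument: an isolated point in a single location must be propagated to every point of the group. Here I would use the group structure, which is exactly where the hypothesis that $G$ is a \emph{group} (rather than merely a semigroup) enters. For any $g\in G$ the left shift $l_{g g_0^{-1}}:G\to G$, $x\mapsto g g_0^{-1}x$, is a bijection whose inverse is $l_{g_0 g^{-1}}$; both are continuous because $\tau$ is shift-invariant, so $l_{gg_0^{-1}}$ is a self-homeomorphism of $(G,\tau)$. It sends $g_0$ to $g$, hence carries the isolated point $g_0$ to an isolated point $g$.

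Therefore every point of $(G,\tau)$ is isolated, which is precisely the statement that $\tau$ is the discrete topology. I expect the main obstacle to be purely a matter of invoking the right ingredients in order rather than any genuine difficulty: one must be careful that shift-invariance supplies continuity of \emph{both} $l_{gg_0^{-1}}$ and its inverse (so that it is a homeomorphism and not merely a continuous bijection), and this is the step that fails for a general semigroup and explains why the conclusion is stated for groups. The appeal to the Baire hypothesis is likewise essential, as it is the only thing forcing the existence of even a single isolated point in the $\sigma$-discrete space.
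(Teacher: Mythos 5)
Your proposal is correct and follows the same route as the paper: invoke Corollary~\ref{cor1} for $\sigma$-discreteness, use the Baire plus $T_1$ fact to produce an isolated point, and propagate it by topological homogeneity of the group. You merely make explicit (correctly) why left shifts are self-homeomorphisms under a shift-invariant topology on a group, a step the paper leaves implicit.
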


\begin{proof} Let $\tau$ be a Baire Hausdorff shift-invariant topology on a perfectly supportable group $G$. Since $G$ is a group, the topological space $(G,\tau)$ is topologically invariant.
%Indeed, for any points $a,b\in G$ the map $h:G\to G$, $h:x\mapsto ba^{-1}x$, is a homeomorphism of $G$ with $h(a)=b$.

By Corollary~\ref{cor1}, the topological space $(G,\tau)$ is $\sigma$-discrete and being Baire, has an isolated point. The topological homogeneity of $(G,\tau)$ guarantees that each point of $G$ is isolated and hence the space $(G,\tau)$ is discrete.
\end{proof}

\section{An example of a $\supp$-perfect semigroup}\label{s:rel}

In this section we consider an important example of a $\supp$-semigroup (which contains many other $\supp$-semigroups as $\supp$-subsemigroups).

Given a set $X$, consider the semigroup $\Rel(X)=2^{X\times X}$ of all relations $f\subset X\times X$, endowed with the operation
$$f\circ g=\{(x,z)\in X\times X:\exists y\in X\;\;(x,y)\in f,\;(y,z)\in g\}$$of composition of relations.  For a relation $f\subset X\times X$ by $f^{-1}$ we denote the inverse relation $$f^{-1}=\{(y,x):(x,y)\in f\}.$$

Each relation $f\subset X\times X$ can be considered as a multi-valued function assigning to each point $x\in X$ the subset $f(x)=\{y\in X:(x,y)\in f\}$ and to each subset $A\subset X$ the subset $f(A)=\bigcup_{a\in A}f(a)$ of $X$. Observe that two relations $f,g\subset X\times X$ are equal if and only if $f(x)=g(x)$ for all $x\in X$.

Each function $f:X\to X$ can be identified with the relation $\{(x,f(x)):x\in X\}$. So, the semigroup $\End(X)$ of all self-maps of $X$ is a subsemigroup of the semigroup $\Rel(X)$. By the same reason, the symmetric group $\Sym(X)$ is a subgroup of the semigroup $\Rel(X)$.

For a relation $f\subset X\times X$ its support is defined as the subset
$$\supp(f)=\big\{x\in X:f(x)\ne\{x\}\mbox{ or }f^{-1}(x)\ne\{x\}\big\}\subset X.$$

\begin{proposition}\label{prop1} The pair $(\Rel(X),\supp)$ is a $\supp$-semigroup.
\end{proposition}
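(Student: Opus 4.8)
The plan is to first establish a structural description of a relation in terms of its support, and then read off both axioms of a $\supp$-semigroup from it. Writing $F=\supp(f)$, I would observe that $x\notin F$ means precisely $f(x)=\{x\}$ and $f^{-1}(x)=\{x\}$. From the first equality, for $a\notin F$ the only edge of $f$ leaving $a$ is $(a,a)$; from the second, for $b\notin F$ the only edge of $f$ entering $b$ is $(b,b)$. Consequently every off-diagonal pair $(a,b)\in f$ with $a\ne b$ has both $a\in F$ and $b\in F$, while $(a,a)\in f$ for every $a\notin F$. This yields the clean decomposition
$$f=\big(f\cap(F\times F)\big)\cup\Delta_{X\setminus F},\qquad \Delta_{X\setminus F}=\{(a,a):a\notin F\},$$
so that $f$ restricts to the identity on $X\setminus F$, satisfies $f(F)\subset F$, and has no edges crossing between $F$ and $X\setminus F$.

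For condition (1) of Definition~\ref{def1}, I would fix $x\notin F\cup G$ where $F=\supp(f)$ and $G=\supp(g)$, and compute directly using $(f\circ g)(x)=g(f(x))$. Since $f(x)=\{x\}$ and then $g(x)=\{x\}$, we get $(f\circ g)(x)=\{x\}$. Symmetrically, using $g^{-1}(x)=\{x\}$ and $f^{-1}(x)=\{x\}$, one obtains $(f\circ g)^{-1}(x)=\{x\}$. Hence $x\notin\supp(f\circ g)$, which proves $\supp(f\circ g)\subset F\cup G$.

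For condition (2), assume $F\cap G=\emptyset$ and partition $X=F\sqcup G\sqcup R$ with $R=X\setminus(F\cup G)$. By the decomposition, $f$ acts via $f\cap(F\times F)$ on $F$ and as the identity on $G\cup R$, while $g$ acts via $g\cap(G\times G)$ on $G$ and as the identity on $F\cup R$. I would then evaluate $(f\circ g)(x)=g(f(x))$ and $(g\circ f)(x)=f(g(x))$ block by block: for $x\in F$ both equal $f(x)$, since $f(x)\subset F$ and $g$ fixes $F$; for $x\in G$ both equal $g(x)$, since $g(x)\subset G$ and $f$ fixes $G$; and for $x\in R$ both equal $\{x\}$. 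As $(f\circ g)(x)=(g\circ f)(x)$ for every $x\in X$, the remark that relations are determined by their values gives $f\circ g=g\circ f$.

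The two direct computations are routine; the conceptual core is the structural lemma, since once $f$ and $g$ are known to have no crossing edges and to be supported on disjoint blocks, the commutation in (2) becomes a block-by-block triviality. I expect the main obstacle to be stating the decomposition precisely enough that the steps of the form ``$f(F)\subset F$ and $g$ fixes $F$'' in (2) are genuinely justified rather than merely plausible, and in particular verifying carefully that $f$ admits no off-diagonal edge with exactly one endpoint in $F$.
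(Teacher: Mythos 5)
Your proof is correct, and for condition (2) it takes a genuinely different route from the paper. The paper verifies $\supp(f\circ g)\subset\supp(f)\cup\supp(g)$ exactly as you do, but it proves commutativity by element chasing: it fixes a point $y\in f\circ g(x)$, picks an intermediate point $z$, and splits into the cases $y\ne z$, $y=z=x$ and $y=z\ne x$, using disjointness of the supports in each case to relocate $y$ into $g\circ f(x)$. You instead isolate the structural fact that $f=\bigl(f\cap(F\times F)\bigr)\cup\Delta_{X\setminus F}$ for $F=\supp(f)$ --- a relation has no edges crossing the boundary of its support and is the identity outside it --- after which commutativity is a three-line block computation on the partition $X=F\sqcup G\sqcup R$. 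Your decomposition is correctly justified (an off-diagonal edge $(a,b)\in f$ forces $f(a)\ne\{a\}$ and $f^{-1}(b)\ne\{b\}$, hence $a,b\in F$), and the resulting argument is shorter and more transparent than the paper's case analysis, at the cost of the small extra lemma. A further point in your favour: you consistently use $(f\circ g)(x)=g(f(x))$, which is what the paper's definition of $\circ$ actually yields, whereas the paper's own commutativity argument silently switches to the opposite convention $(f\circ g)(x)=f(g(x))$; this is harmless there by the symmetry of the claim, but your version is the one faithful to the stated definition.
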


\begin{proof} Fix any relations $f,g\subset X\times X$.

To show that $\supp(f\circ g)\subset\supp(f)\cup\supp(g)$, take any point $x\in X$ that does not belong to the union $\supp(f)\cup\supp(g)$. Then
$$f\circ g(x)=f(\{x\})=\{x\}$$and
$$(f\circ g)^{-1}(x)=g^{-1}\circ f^{-1}(x)=g^{-1}(\{x\})=\{x\},$$which means that $x\notin\supp(f\circ g)$.

Next, assuming that $\supp(f)\cap \supp(g)=\emptyset$, we shall show that $f\circ g(x)=g\circ f(x)$ for each point $x\in X$. The inclusion $f\circ g(x)\subset g\circ f(x)$ will follow as soon as we prove that each point $y\in f\circ g(x)$ belongs to the set $g\circ f(x)$. Find a point $z\in g(x)$ such that $y\in f(z)$.

If $y\ne z$, then $f(z)\ne\{z\}$ and $f^{-1}(y)\ne\{y\}$, which implies $y,z\in\supp(f)\subset X\setminus \supp(g)$. Then $x\in g^{-1}(z)=\{z\}$ and hence $x=z$ and $$y\in \{y\}=g(y)\subset g\circ f(z)=g\circ f(x).$$

Now assume that $y=z$. If $z=x$, then $y=z\in g(x)=g(y)\subset g\circ f(x)$. If $z\ne x$, then $z\in g(x)\ne\{x\}$ and $x\in g^{-1}(z)\ne\{z\}$ and hence $x,z\in\supp(g)\subset X\setminus \supp(f)$.
Then $y=z\in g(x)=g(\{x\})=g\circ f(x)$.

This completes the proof of the inclusion $y\in g\circ f(x)$, which implies that $f\circ g(x)\subset g\circ f(x)$. By analogy we can prove that $g\circ f(x)\subset f\circ g(x)$.
\end{proof}

In the $\supp$-semigroup $\Rel(X)$ consider the $\supp$-subsemigroup
$$\FRel(X)=\{f\in\Rel(X):\supp(f)\mbox{ is finite}\}.$$
Observe that for each (finite) subset $F\subset X$ the subsemigroup $\{f\in\Rel(X):\supp(f)\subset F\}$ can be identified with the (finite) semigroup $\Rel(F)$, which has cardinality $2^{|F\times F|}$. This observaion implies:

\begin{proposition} The semigroup $\FRel(X)$ endowed with the support map $\supp:\FRel(X)\to 2^X$ is $\supp$-finitary.
\end{proposition}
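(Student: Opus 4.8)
The plan is to verify directly the two conditions in the definition of a $\supp$-finitary semigroup. Condition (1) is immediate: by the very definition $\FRel(X)=\{f\in\Rel(X):\supp(f)\mbox{ is finite}\}$, every element of $\FRel(X)$ has finite support. It remains to establish condition (2), namely that for each finite $F\subset X$ the subsemigroup $\FRel(X)(F)=\{f\in\FRel(X):\supp(f)\subset F\}$ is finite. Since $\supp(f)\subset F$ already forces $\supp(f)$ to be finite whenever $F$ is finite, this set coincides with $\{f\in\Rel(X):\supp(f)\subset F\}$, so I may work inside $\Rel(X)$.

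The key step is to make precise the identification, announced just before the statement, of $\{f\in\Rel(X):\supp(f)\subset F\}$ with $\Rel(F)=2^{F\times F}$. I would introduce the restriction map $\rho:f\mapsto f\cap(F\times F)$ and show it is a bijection onto $2^{F\times F}$. The crux is to determine which pairs $(x,y)$ a relation $f$ with $\supp(f)\subset F$ may contain. If $x\notin F$ then $x\notin\supp(f)$, so $f(x)=\{x\}$, which forces any pair $(x,y)\in f$ to satisfy $y=x$; symmetrically, if $y\notin F$ then $f^{-1}(y)=\{y\}$, forcing $x=y$. Consequently the only pairs of $f$ lying outside $F\times F$ are the diagonal pairs $(x,x)$ with $x\in X\setminus F$, and each such pair is in fact present (again because $f(x)=\{x\}$). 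Thus $f$ is completely recovered from $f\cap(F\times F)$ via $f=\rho(f)\cup\{(x,x):x\in X\setminus F\}$, which gives injectivity of $\rho$; conversely, for any $r\subset F\times F$ the relation $r\cup\{(x,x):x\in X\setminus F\}$ has support contained in $F$, which gives surjectivity.

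Having the bijection, I would conclude that $|\{f\in\Rel(X):\supp(f)\subset F\}|=|2^{F\times F}|=2^{|F|^2}$, which is finite since $F$ is finite. Hence $\FRel(X)(F)$ is finite and condition (2) holds, completing the verification that $(\FRel(X),\supp)$ is $\supp$-finitary.

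The routine-but-delicate part — the one place where a slip is easy — is the case analysis showing that no mixed pair (one coordinate inside $F$, the other outside) can belong to a relation of support inside $F$, and that the off-$F$ diagonal is forced to be present. Everything else (the counting and the two defining conditions) is formal. If one additionally wanted the identification to be a semigroup isomorphism rather than a mere bijection of underlying sets, one would have to check that $\rho$ respects composition of relations; but for $\supp$-finitarity only the finiteness of the underlying set is needed, so this extra verification can be omitted.
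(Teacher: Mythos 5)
Your argument is correct and matches the paper's, which simply asserts the identification of $\{f\in\Rel(X):\supp(f)\subset F\}$ with $\Rel(F)=2^{F\times F}$ as an observation of cardinality $2^{|F\times F|}$; you supply the details of that bijection (forced diagonal outside $F$, no mixed pairs) that the paper leaves implicit. Your remark that only a bijection of underlying sets, not a semigroup isomorphism, is needed for finitarity is also accurate.
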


Finally we check that the $\supp$-semigroup $\FRel(X)$ and some its subsemigroups are $\supp$-perfect.

\begin{proposition}\label{prop2} For an infinite  set $X$, a subsemigroup $S\subset \FRel(X)$ is $\supp$-perfect if for each finite subset $E\subset X$ and each point $x\in X\setminus E$ there is a relation $g\in S(X{\setminus}E)$ such that $x\notin g(x)\ne\emptyset$.
\end{proposition}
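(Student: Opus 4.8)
The plan is to verify the two defining properties of a $\supp$-perfect semigroup (Definition~\ref{def2}): that $S$ is $\supp$-finitary, and that $S(F)=\Cent(S(X{\setminus}F))$ for every finite subset $F\subset X$. The first property is inherited for free, since $S\subset\FRel(X)$: every $f\in S$ has finite support, and for finite $F$ the semigroup $S(F)$ embeds into the finite semigroup $\Rel(F)$, hence is finite. The inclusion $S(F)\subset\Cent(S(X{\setminus}F))$ always holds by condition (2) of Definition~\ref{def1}, so the whole content of the proposition is the reverse inclusion $\Cent(S(X{\setminus}F))\subset S(F)$.

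To prove this reverse inclusion I would argue by contradiction. Fixing $f\in\Cent(S(X{\setminus}F))$ together with a point $x\in\supp(f)\setminus F$, I would apply the hypothesis to the finite set $E=(\supp(f)\cup F)\setminus\{x\}$ and the point $x\notin E$, obtaining a relation $g\in S(X{\setminus}E)$ with $x\notin g(x)\ne\emptyset$. By the choice of $E$ this $g$ satisfies $\supp(g)\cap\supp(f)\subset\{x\}$ and $\supp(g)\cap F=\emptyset$, so in particular $g\in S(X{\setminus}F)$; thus it suffices to show $f\circ g\ne g\circ f$, contradicting $f\in\Cent(S(X{\setminus}F))$.

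The heart of the argument, and the step I expect to be the main obstacle, is exhibiting a pair lying in exactly one of $f\circ g$ and $g\circ f$. Picking $y\in g(x)$ with $y\ne x$ (possible since $x\notin g(x)\ne\emptyset$), one first checks $y\in\supp(g)\setminus\supp(f)$, so that $f(y)=f^{-1}(y)=\{y\}$. Because relations need not be functions, I expect to split into cases according to the behaviour of $f$ at $x$, using repeatedly the elementary fact that $f(p)\cup f^{-1}(p)\subset\supp(f)$ for every $p\in\supp(f)$ (and its analogue for $g$), together with $\supp(f)\cap\supp(g)\subset\{x\}$, to confine all the relevant intersections to the single point $x$. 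When $x\notin f(x)$, the pair $(x,y)$ lies in $g\circ f$ but not in $f\circ g$. When $x\in f(x)$ and $f(x)\ne\{x\}$, choosing $z\in f(x)\setminus\{x\}$ the pair $(x,z)$ lies in $f\circ g$ but not in $g\circ f$, and here the condition $x\notin g(x)$ is exactly what forces the discrepancy. When $x\in f(x)=\{x\}$, so that membership $x\in\supp(f)$ is witnessed by some $w\in f^{-1}(x)\setminus\{x\}$, the pair $(w,y)$ lies in $f\circ g$ but not in $g\circ f$. In every case $f\circ g\ne g\circ f$, completing the contradiction. The delicate point is purely bookkeeping—tracking which singletons survive the intersections—and the two clauses of the hypothesis are each used essentially: $g(x)\ne\emptyset$ supplies the witness $y$, while $x\notin g(x)$ breaks the symmetry in the case $x\in f(x)$.
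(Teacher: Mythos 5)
Your proposal is correct and follows essentially the same route as the paper: the same choice of $E=(\supp(f)\cup F)\setminus\{x\}$, the same witness $g$ with $x\notin g(x)\ne\emptyset$, and the same reduction via a case analysis on $f(x)$ with the degenerate case $f(x)=\{x\}$ handled through a point of $f^{-1}(x)\setminus\{x\}$. The only (cosmetic) difference is that you exhibit explicit pairs in the symmetric difference of $f\circ g$ and $g\circ f$, whereas the paper assumes commutativity and derives a contradiction from the resulting set equalities.
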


\begin{proof} Given a finite subset $F\subset X$ we should check that $S(F)=\Cent(S(X{\setminus}F))$. The inclusion $S(F)\subset \Cent(S(X{\setminus}F))$ holds because $S$ is a $\supp$-semigroup. To prove that $\Cent(S(X{\setminus}F))\subset S(F)$, take any element $f\in \Cent(S(X{\setminus}F))$ and assume that $f\notin S(F)$. Then $\supp(f)\not\subset F$ and we can choose a point $x\in \supp(f)\setminus F$. By our assumption, for the finite set $E=F\cup(\supp(f)\setminus\{x\})$ there is a relation $g\in S(X{\setminus}E)\subset S(X{\setminus}F)$ such that $x\notin g(x)\ne\emptyset$. Then $\{x\}\cup g(x)\subset\supp(g)$.

On the other hand, $x\in\supp(f)$ implies $\{x\}\cup f(x)\subset\supp(f)$.
Then $$f(x)\cap g(x)\subset\supp(f)\cap\supp(g)\subset\{x\}$$and hence
$f(g(x))=g(x)$ and $g(f(x)\setminus\{x\})=f(x)\setminus\{x\}$.

We claim that $f\circ g\ne g\circ f$. Assuming that $f\circ g=g\circ f$ and taking into account that  $x\notin g(x)\ne\emptyset$, we conclude that
$$g(x)=f(g(x))=g(f(x))\supset g(f(x)\setminus\{x\})=f(x)\setminus\{x\}$$
and hence $f(x)\setminus\{x\}\subset (f(x)\setminus\{x\})\cap g(x)\subset \supp(f)\cap\supp(g)\setminus\{x\}=\emptyset$, which implies that $f(x)\subset\{x\}$.

If $f(x)=\emptyset$, then the set $g(f(x))$ is empty while $f(g(x))=g(x)\ne\emptyset$. So, $f\circ g\ne g\circ f$.

So, $f(x)\ne\emptyset$ and hence $f(x)=\{x\}$. Then $x\in\supp(f)$ implies that $f^{-1}(x)\ne\{x\}$. Since $x\in f^{-1}(x)$, the set $f^{-1}(x)\ne\{x\}$ is not empty and hence it contains a point $y\in f^{-1}(x)\setminus\{x\}$. It follows that $\{y\}\cup f(y)\subset\supp(f)\subset \{x\}\cup(X\setminus \supp(g))$ and hence $g(y)=\{y\}$ and $x\in f(\{y\})=f(g(y))=g(f(y))=g(f(y)\setminus\{x\})\cup g(x)=(f(y)\setminus\{x\})\cup g(x)$, which contradicts $x\notin g(x)$.

This contradiction shows that $g\circ f\ne f\circ g$ and hence $f\notin \Cent(S(X{\setminus}F))$ as $g\in S(X{\setminus}E)\subset S(X{\setminus}F)$.
\end{proof}

Proposition~\ref{prop2} and Corollary~\ref{cor1} imply:

\begin{corollary}\label{cor3} Let $X$ be an infinite set and $S\subset\FRel(X)$ be a subsemigroup such that  for each finite subset $E\subset X$ and each point $x\in X\setminus E$ there is a relation $g\in S$ such that $x\notin g(x)\ne\emptyset$ and $\supp(g)\cap E\ne\emptyset$. Then the semigroup $S$ is $\sigma$-discrete is its semi-Zariski topology and consequently is $\sigma$-discrete in each Hausdorff shift-invariant topology on $S$.
\end{corollary}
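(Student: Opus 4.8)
The plan is to derive this corollary by reduction to Proposition~\ref{prop2} and then invoking Corollary~\ref{cor1}. The hypothesis of Corollary~\ref{cor3} differs from that of Proposition~\ref{prop2} in two respects: it demands the existence of a relation $g\in S$ rather than $g\in S(X{\setminus}E)$, and it adds the requirement $\supp(g)\cap E\ne\emptyset$. So the first thing I would verify is that the hypothesis of Corollary~\ref{cor3} actually implies the hypothesis of Proposition~\ref{prop2}; once this implication is in hand, Proposition~\ref{prop2} says $S$ is $\supp$-perfect, whence $S$ is perfectly supportable, and Corollary~\ref{cor1} immediately yields that $S$ is $\sigma$-discrete in its semi-Zariski topology and in every Hausdorff shift-invariant topology.

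The reduction step is the heart of the argument. Given a finite set $E\subset X$ and a point $x\in X\setminus E$, I must produce a relation in $S(X{\setminus}E)$ witnessing $x\notin g(x)\ne\emptyset$, using only that such a witness exists somewhere in $S$. The natural idea is to apply the hypothesis not to $E$ but to a larger finite set $E'$ that absorbs whatever extra support the candidate relation may carry. Concretely, I would feed the hypothesis a set $E'\supset E$ chosen so that the relation $g$ it returns is forced to have $\supp(g)$ avoiding $E$ (so that $g\in S(X{\setminus}E)$) while still satisfying $x\notin g(x)\ne\emptyset$. The condition $\supp(g)\cap E'\ne\emptyset$ in the hypothesis appears designed to drive an inductive or exhaustion argument: by enlarging $E'$ step by step one accumulates supports in distinct regions of $X$, and since each new $g$ must meet the current finite set $E'$ in its support, one cannot keep reusing the same finitely supported relation. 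This lets me find, among the available witnesses, one whose support is disjoint from the original $E$.

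The main obstacle I anticipate is organizing this exhaustion cleanly: I want to guarantee that iterating the hypothesis eventually produces a witness $g$ with $\supp(g)\cap E=\emptyset$, and the bookkeeping must respect that each application returns a relation whose support meets the growing test set but whose values still satisfy $x\notin g(x)\ne\emptyset$. I would set this up as an induction building an increasing sequence of finite sets $E=E_0\subset E_1\subset\cdots$, at each stage applying the hypothesis to $E_i$ to obtain $g_i$ with $x\notin g_i(x)\ne\emptyset$ and $\supp(g_i)\cap E_i\ne\emptyset$, then setting $E_{i+1}=E_i\cup\supp(g_i)$. The supports $\supp(g_i)$ thus grow to cover more of $X$, and a counting or infiniteness argument on $\bigcup_i\supp(g_i)$ against the fixed finite $E$ should isolate a witness lying entirely outside $E$, completing the verification of Proposition~\ref{prop2}'s hypothesis. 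The rest is then a one-line appeal to the results already established.
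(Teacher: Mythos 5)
Your overall architecture --- verify the hypothesis of Proposition~\ref{prop2}, conclude that $S$ is $\supp$-perfect, and then apply Corollary~\ref{cor1} --- is exactly the paper's route; indeed the paper offers no proof beyond the words ``Proposition~\ref{prop2} and Corollary~\ref{cor1} imply,'' because it reads the hypothesis of Corollary~\ref{cor3} as being \emph{identical} to that of Proposition~\ref{prop2}. The discrepancy you detected is a typo in the statement: ``$\supp(g)\cap E\ne\emptyset$'' should read ``$\supp(g)\cap E=\emptyset$''. Two pieces of evidence: the condition as printed is unsatisfiable when $E=\emptyset$, so the corollary would be vacuous; and in the intended application to semigroups $S\supset\FSym(X)$ the witnesses are transpositions moving $x$ to a point outside $E\cup\{x\}$, whose supports are disjoint from $E$. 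With the sign corrected, ``$g\in S$ with $\supp(g)\cap E=\emptyset$'' is verbatim ``$g\in S(X\setminus E)$'', and your reduction step evaporates: there is nothing to bridge.

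The real difficulty is that the bridge you propose cannot be built. Taking the printed hypothesis at face value, your exhaustion scheme has no mechanism for producing a witness whose support avoids $E$. Since $E=E_0\subset E_i$ for every $i$, any relation whose support meets $E$ automatically meets $E_i$; hence at every stage the hypothesis is free to return the very same witness $g_0$ again (or a sequence of witnesses all of whose supports contain a fixed point of $E$), and the iteration never yields a $g$ with $\supp(g)\cap E=\emptyset$. The requirement ``$\supp(g)\cap E'\ne\emptyset$'' only gets \emph{easier} to satisfy as $E'$ grows, so enlarging the test set buys no leverage, and no counting argument on $\bigcup_i\supp(g_i)$ can help. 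There is also a smaller bookkeeping defect: $x\notin g_i(x)\ne\emptyset$ forces $g_i(x)\ne\{x\}$, hence $x\in\supp(g_i)$, so $E_{i+1}=E_i\cup\supp(g_i)$ contains $x$ and the hypothesis can no longer be applied to the pair $(E_{i+1},x)$. The correct move is not an induction but the observation that the inequality in the statement must be an equality, after which the corollary is an immediate consequence of Proposition~\ref{prop2} and Corollary~\ref{cor1}.
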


This corollary implies the following theorem announced in Abstract.

\begin{theorem} Let $X$ be an infinite set $X$ and $S$ be a semigroup such that $\FSym(X)\subset S\subset\FRel(X)$. Then
\begin{enumerate}
\item $S$ is perfectly supportable;
\item $S$ is $\sigma$-discrete is its semi-Zariski topology $\Zeta$;
\item $S$ is $\sigma$-discrete in each Hausdorff shift-invariant topology on $S$.
\end{enumerate}
\end{theorem}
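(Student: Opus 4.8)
The plan is to deduce all three assertions from the machinery already developed, namely Proposition~\ref{prop2} together with Corollary~\ref{cor1}. The essential (and only nontrivial) point is to verify that the subsemigroup $S$ satisfies the hypothesis of Proposition~\ref{prop2}: for every finite subset $E\subset X$ and every point $x\in X\setminus E$ there exists a relation $g\in S(X{\setminus}E)$ with $x\notin g(x)\ne\emptyset$. Once this is checked, Proposition~\ref{prop2} yields that $(S,\supp)$ is $\supp$-perfect, whence $S$ is perfectly supportable by definition, giving assertion (1); then assertions (2) and (3) follow immediately from Corollary~\ref{cor1}.

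To verify the hypothesis I would exploit the inclusion $\FSym(X)\subset S$. Fix a finite set $E\subset X$ and a point $x\in X\setminus E$. Since $X$ is infinite while $E\cup\{x\}$ is finite, I can choose a point $y\in X\setminus(E\cup\{x\})$ and let $g$ be the transposition interchanging $x$ and $y$ and fixing all other points. As an element of $\FSym(X)\subset S$, this $g$ lies in $S$; viewed as a relation, $g(x)=\{y\}$, so that $g(x)\ne\emptyset$ and $x\notin g(x)$ because $y\ne x$. Finally $\supp(g)=\{x,y\}$ is disjoint from $E$ (as neither $x$ nor $y$ lies in $E$), so $g\in S(X{\setminus}E)$, exactly as required.

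Before invoking Proposition~\ref{prop2} I would also record the standing preliminaries that make it applicable: $(S,\supp)$ is a $\supp$-subsemigroup of $\FRel(X)$, hence a $\supp$-semigroup, and it is $\supp$-finitary, since every element of $\FRel(X)$ has finite support and, for each finite $F\subset X$, the subsemigroup $S(F)$ is contained in the finite semigroup $\Rel(F)$ and is therefore finite. Moreover $\supp(S)=X$, because every $x\in X$ belongs to $\supp$ of the transposition interchanging $x$ with any $y\ne x$, and such a transposition lies in $\FSym(X)\subset S$.

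I do not anticipate a genuine obstacle here, since the substantive work was already carried out in Theorem~\ref{main}, Corollary~\ref{cor1}, and Proposition~\ref{prop2}; the only thing demanding care is the bookkeeping of supports, namely confirming simultaneously that the chosen transposition lies in $S(X{\setminus}E)$ and moves $x$ off itself. This is precisely what the freedom afforded by the infinitude of $X$ guarantees, and it is the step I would double-check most carefully.
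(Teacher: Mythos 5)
Your proposal is correct and follows essentially the same route as the paper, which deduces the theorem from Proposition~\ref{prop2} (via Corollary~\ref{cor3}) together with Corollary~\ref{cor1}; the witness you supply for the hypothesis of Proposition~\ref{prop2} --- a transposition of $x$ with a fresh point $y\notin E\cup\{x\}$, available in $\FSym(X)\subset S$ --- is exactly the intended verification, which the paper leaves implicit. Your additional bookkeeping (that $S$ is a $\supp$-finitary $\supp$-subsemigroup of $\FRel(X)$ with $S(F)\subset\Rel(F)$ finite) is also correct and fills in the standing hypotheses needed to apply the proposition.
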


\section{Some Open Problems}

The second statement of Theorem~\ref{main} suggests the following question.

\begin{problem} Let $(S,\supp)$ be a $\supp$-perfect semigroup and $x\in X$.
Is the subset $\{f\in S:x\in\supp(f)\}$ open in the semi-Zariski topology $\Zeta$? In each Hausdorff semigroup topology on $S$?
\end{problem}

By Proposition~\ref{fdc}, each perfectly supportable semigroup has finite double centralizers. We do not know if the converse is also true.

\begin{problem} Is each (semi)group with finite double centralizers perfectly supportable?
\end{problem}

The affirmative answer to this problem would imply affirmative answers to the following two problems:

\begin{problem} Is each (semi)group with finite double centralizers $\sigma$-discrete in its semi-Zariski topology?
\end{problem}

\begin{problem} Let $S$ be a semigroup with finite double centralizers and $n\in\mathbb N$. Is the set $S_{\le n}=\{f\in S:|\Cent(\Cent(f))|\le n\}$ closed in the semi-Zariski topology of $S$? Is it $\sigma$-discrete?
\end{problem}

\end{document}